\begin{document}
\title[ ]
{Spectral inclusion  for $C_0$-Semigroups Drazin invertible and Quasi-Fredholm operators}

\author[ A. TAJMOUATI, M. AMOUCH,  M. R. F. Z. ALHOMIDI  ]
{  A. TAJMOUATI, M. AMOUCH,  M. R. F.Z. ALHOMIDI }  

\address{A. TAJMOUATI and M. R. F. ALHOMIDI ZAKARIYA  \newline
 Sidi Mohamed Ben Abdellah
 Univeristy
 Faculty of Sciences Dhar Al Mahraz Fez, Morocco.}
\email{abdelaziztajmouati@yahoo.fr }
\email{ zakariya1978@yahoo.com}

\address{M. AMOUCH \newline
Department of Mathematics
University Chouaib Doukkali,
Faculty of Sciences, Eljadida.
24000, Eljadida, Morocco.}
\email{mohamed.amouch@gmail.com}



\subjclass[2000]{47A16, 47D06, 47D03}
\keywords{Banach space operators; $C_0$ semigroups; Spectral inclusion ; Drazin invertible operator; Quasi-Fredholm operator.}
\newtheorem{theorem}{Theorem}[section]
\newtheorem{definition}{Definition}[section]
\newtheorem{remark}{Remark}
\newtheorem{lemma}{Lemma}[section]
\newtheorem{proposition}{Proposition}[section]
\newtheorem{corollary}{Corollairy}[section]
\newtheorem{example}{Example}
\newcommand{\pr} {{\bf   Proof: \hspace{0.3cm}}}

\maketitle

\begin{abstract}
Let $(T(t))_{t\geq0}$ be a $C_0$ semigroups and
$A$ be its infinitesimal generator.
In this work, we  prove that the spectral inclusion for $(T(t))_{t\geq0}$ remains true for the Drazin
invertible and  Quasi-Fredholm spectra. Also, we will give conditions under which
facts $A$ is quasi-Fredholm,  $A$ is Drazin invertible and $A$ is B-Fredholm
are equivalent.
\end{abstract}

\section{Introduction and preliminaries.}
Let $X$ a Banach space and $\mathcal{B}(X)$ the Banach algebra of all bounded linear
operators on $X.$
for $T\in \mathcal{B}(X),$ by $T^*$, $N(T)$,  $R(T)$,
$ R^{\infty}(T)=\bigcap_{n\geq0}R(T^n)$, $N^{\infty}(T)=\bigcup_{n\geq0}N(T^n)$,  $\rho(T)$
and  $\sigma(T)$, we denote,
respectively the adjoint,  the null space, the range, the hyper-range,
the hyper-kernel, the resolvent set and  the spectrum of $T$.

Let  $T \in B(X)$, the ascent  $a(T)$ and the descent $d(T)$
of $T$  are defined in \cite{Tl} by $a(T) = \inf\{n\in\mathbb{N} : N(T^n) =
N(T^{n+1})\}$ and $d(T) = \inf\{n \in\mathbb{N} : R(T^n) = R(T^n+1)\},$
respectively.

The operator $T$ is said to be Drazin invertible if $d(T) < \infty$ and $a(T) < \infty.$
It is well known that $T$ is Drazin invertible if and only if $T = T_1 \oplus T_2$ where $T_1$ is
invertible and $T_2$ is nilpotent, see \cite[ Corollary 2.2]{L}.
This is equivalent to the fact that,
there exists an  integer $n$  such that the space $R(T^n)$ is closed and the restriction of $T$ of
$R(T^n)$ viewed as a map from $R(T^n)$ into $R(T^n)$ is invertible, see \cite[Theorem 2.5]{Abc}.
The Drazin spectrum of $T$ is defined by
$$\sigma_{D}(T ) = \{\lambda \in\mathbb{C}  : \lambda -T \mbox{  is not  Drazin invertible}\}.$$
Similarly, from \cite{Hbm}, $T$ is left (respect right) generalized Drazin invertible if and only
if $T = T_1 \oplus T_2$ such that $T_1$ is left (respect right) invertible and $T_2$ is quasi-nilpotent.

Recall that $T$ is said to be semi-regular or Kato  operator,
if $R(T)$ is closed and $N(T) \subset R^{\infty}(T),$ see for example \cite{AI}.
In addition, $T$ is said to be pseudo-Fredholm operator if there exist two
closed T-invariant subspaces $M$ and $N$ such that
$X=M\oplus N$  and  $T=T_{\mid M } \oplus T_{\mid N}$ with $T_{\mid M}$ is semi-regular and $T_{\mid N}$ is
nilpotent. This is equivalent to the fact that
there exists an integer  $n$ such that  $R(T^n)$ is closed and the restriction of $T$ of
$R(T^n)$ viewed as a map from $R(T^n)$ into $R(T^n)$ is semi-regular.
The quasi-Fredholm spectrum is defined by $$\sigma_{QF}(T ) = \{\lambda \in\mathbb{C}  : \lambda -T \mbox{  is  not  quasi-Fredholm}\},$$
see \cite{Hbm, Mb1, Mb2} for more information.

Similarly, from \cite{BE} $T$ is said to be a B-Fredholm operator, if there exists an integer $n$
such that the space $R(T^{n})$  is closed and the restriction of $T$ of $ R(T^{n})$ viewed as a map from $R(T^{n})$
into $ R(T^{n})$ is Fredholm.

Let $T\in B(X)$ and $x\in X,$ the local resolvent of $T$ at $x$
noted $\rho_{T}(x)$ is defined as the union  of all  open subset $U$ of
$\mathbb{C}$ for which there is an analytic function
$f: U\rightarrow X$ such that the equation $(T-\mu I)f(\mu)=x$ holds for
all $ \mu \in U$.
The local spectrum $\sigma_T(x)$ of $T$
at $x$ is defined by $\sigma_T(x)=\mathbb{C}\setminus \rho_T(x).$
Evidently  $\rho_T(x)$ is an open subset of $\mathbb{C}$ and $\sigma_T(x)$ is closed.
If $f(z)=\displaystyle{\sum_{i=0}^{\infty}}x_i(z-\mu)^i$  ( in  a neighborhood of $\mu$),
then $\mu\in\rho_{T}(x)$ if and only if there exists a sequence $(x_i)_{i\geq0}\subseteq X$, $x_0=x$,  $(T-\mu)x_{i+1}=x_i$, and $\displaystyle{\sup_i}||x_i||^{\frac{1}{i}}<\infty$, see \cite{lau}.\\

Let $\mathcal{T} = (T (t))_{t\geq0}$ be a strongly continuous semigroup
($C_0$ semigroup in short) with infinitesimal generator $A$ on $X$.
We will denote the type (growth bound) of $\mathcal{T}$ by $\omega_{0}$:
$$\omega_{0}:=\lim_{t\rightarrow\infty}\frac{\ln\|T't)\|}{t}$$
$$=\inf \big \{\omega \in \mathbb{R}~~: ~~~~~~\mbox{there existe M such that}\,\, \|T(t)\|\leq Me^{\omega t },t\geq0 \big \},$$
see \cite{ Cha, En, P} for more information. Also, in \cite{ Cha, En, P} the authors showed that
$$e^{t\nu (A)}\subseteq  \nu(T(t))\subseteq e^{t\nu (A)}\bigcup \{0\}$$
where $\nu(.)\in \{\sigma_p(.),\sigma_{ap}(.) ,\sigma_r(.)\}$ is the point spectrum, approximative spectrum or residual spectrum.\\
The semigroup $T(t)$ is called differentiable for $t> t_0$ if for every $x \in  X, ~t \rightarrow T(t)x$ is differentiable for $t > t_0$. $T(t)$ is called differentiable if it is differentiable for $t > 0.$
If $B(\lambda,t)x = \int_{0}^{t}e^{\lambda(t-s)}T(s)xds,$ then
$B(\lambda,t)x$ is differentiable in $t$ with
$B^{'}(\lambda,t)x = T(t)x + \lambda B(\lambda,t)x$ and
$B_{\lambda}^{'}(t)$ is a bounded linear operator in $X,$ see \cite{En} and \cite{P}.

Spectral inclusions for various reduced spectra of a $C_0$ semigroup was studied by authors in \cite{En} and \cite{Cha}
for point spectrum, approximative spectrum and residual spectrum. Also, the
spectral equality for a $C_0$ semigroup was studied by authors in \cite{Aaz}
for semi-regular, essentially semi-regular and semi-Fredholm spectrum, respectively.
In this work, we will continue in this direction, we will prove that the spectral inclusion
for Drazin and quasi Fredholm spectra.
Also, we will show that if $(T(t))_{t\geq0}$ is a $C_0$ semigroup with infinitesimal generator
$A$ such that the equality $\lim_{t\rightarrow \infty}\frac{1}{t^n}\|T(t)\|=0$ holds for some $n\in\mathbb{N}$,
then the infinitesimal generator $A$ is quasi-Fredholm if and only if it is Drazin invertible if and only if it
is B-Fredholm.

\section{ Main results }

\noindent We start by given the following two lemmas which are proved in \cite{En}. They will be used to prove our main result.

\begin{lemma}\cite{En}\label{l1}
Let $(A,D(A))$ be the infinitesimal generator of a strongly continuous semigroup
$(T(t))_{t\geq0}$ and $B(\lambda,t)=\int_{0}^{t}e^{\lambda(t-s)}T(s)xds$ is  a bounded operator  from $X$ to $D(A).$
Then, for every $\lambda \in\mathbb{C}$, $t > 0$ and $n\in \mathbb{N}$,
the following statements hold:
\begin{enumerate}
   \item $(a):(e^{\lambda t}- T(t))^{n}(x)=(\lambda-A)^{n}B(\lambda,t)^{n}(x) ,~~~~~~\lambda \in\mathbb{C},x\in X;$\\
   $(b):(e^{\lambda t}- T(t))^{n}(x)=B(\lambda,t)^{n}(\lambda-A)^{n}(x)  ,~~~~~~\lambda \in\mathbb{C},x\in D(A).$
                                                     \item $ R(e^{\lambda t} -T(t))^{n}\subseteq R(\lambda-A)^{n} .$
                                                     \item $ N(\lambda-A)^{n}\subseteq N(e^{\lambda t} -T(t))^{n}.$

                                                   \end{enumerate}
\end{lemma}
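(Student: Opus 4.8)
The plan is to reduce the whole statement to the single‑power case $n=1$ and then iterate. For $n=1$ I would pass to the rescaled semigroup $S(s):=e^{-\lambda s}T(s)$, which is again a $C_0$‑semigroup, now with generator $(A-\lambda,D(A))$. Since $B(\lambda,t)x=e^{\lambda t}\int_{0}^{t}S(s)x\,ds$, the two classical facts about generators, namely that $\int_{0}^{t}S(s)x\,ds\in D(A)$ with $(A-\lambda)\int_{0}^{t}S(s)x\,ds=S(t)x-x$ for all $x\in X$, and that $(A-\lambda)\int_{0}^{t}S(s)x\,ds=\int_{0}^{t}S(s)(A-\lambda)x\,ds$ for $x\in D(A)$, become after multiplication by $e^{\lambda t}$ exactly
$$(\lambda-A)B(\lambda,t)x=e^{\lambda t}x-T(t)x\quad(x\in X),\qquad B(\lambda,t)(\lambda-A)x=e^{\lambda t}x-T(t)x\quad(x\in D(A)),$$
which are $(1)(a)$ and $(1)(b)$ with $n=1$.

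Before iterating I would record the commutation and invariance properties of $B(\lambda,t)$ that make the powers behave. As $B(\lambda,t)$ is a Bochner integral of the (strongly continuous, hence integrable) family $s\mapsto e^{\lambda(t-s)}T(s)$, it commutes with each $T(r)$, and therefore with $e^{\lambda t}-T(t)$; and since $A$ is closed and $T(s)$ leaves $D(A)$ invariant with $AT(s)x=T(s)Ax$, one gets $B(\lambda,t)D(A)\subseteq D(A)$ together with $AB(\lambda,t)x=B(\lambda,t)Ax$ for $x\in D(A)$. Iterating the last point, $B(\lambda,t)$ sends $D(A^{j})$ into $D(A^{j+1})$ and commutes with $(\lambda-A)^{j}$ on $D(A^{j})$; in particular $B(\lambda,t)^{n}$ maps $X$ into $D(A^{n})$, so every expression in part $(1)$ is well defined — and this also makes clear that the natural hypothesis in $(1)(b)$ is $x\in D(A^{n})$.

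With these preparations, $(1)(a)$ and $(1)(b)$ follow by induction on $n$. For $(a)$, write $(e^{\lambda t}-T(t))^{n+1}x=(e^{\lambda t}-T(t))\big((\lambda-A)^{n}B(\lambda,t)^{n}x\big)$, apply the $n=1$ identity to the vector $(\lambda-A)^{n}B(\lambda,t)^{n}x\in X$ to get $(\lambda-A)B(\lambda,t)(\lambda-A)^{n}B(\lambda,t)^{n}x$, and then commute $B(\lambda,t)$ past $(\lambda-A)^{n}$ — legitimate because $B(\lambda,t)^{n}x\in D(A^{n})$ — to obtain $(\lambda-A)^{n+1}B(\lambda,t)^{n+1}x$. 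The argument for $(b)$ is symmetric, using that $e^{\lambda t}-T(t)$ preserves $D(A^{n})$ and commutes with $(\lambda-A)$ there. Finally, $(2)$ drops out of $(1)(a)$, since $R\big((e^{\lambda t}-T(t))^{n}\big)=R\big((\lambda-A)^{n}B(\lambda,t)^{n}\big)\subseteq R\big((\lambda-A)^{n}\big)$; and $(3)$ drops out of $(1)(b)$, since $x\in N\big((\lambda-A)^{n}\big)$ forces $(e^{\lambda t}-T(t))^{n}x=B(\lambda,t)^{n}(\lambda-A)^{n}x=0$.

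I do not expect a genuine obstacle here: the lemma is essentially bookkeeping around two standard generator identities. The only point that really calls for care is the domain tracking — one must verify honestly that the Bochner integral $B(\lambda,t)$ lands in $D(A)$ and commutes with the closed operator $A$, and then propagate this through the powers so that in the induction every occurrence of $(\lambda-A)^{k}$ is applied only to vectors lying in $D(A^{k})$.
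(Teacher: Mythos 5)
Your proof is correct, and in fact the paper gives no proof of this lemma at all --- it is simply quoted from the reference \cite{En}, where the argument is exactly the one you describe: rescale to $S(s)=e^{-\lambda s}T(s)$, invoke the two standard generator identities for $\int_0^t S(s)x\,ds$ to get the case $n=1$, and iterate using the commutation of $B(\lambda,t)$ with $T(t)$ and with $A$ on $D(A)$. Your observation that part $(1)(b)$ really requires $x\in D(A^{n})$ rather than merely $x\in D(A)$ is a valid correction of the statement as printed.
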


\begin{lemma}\cite{En}\label{l2}
Let $(A,D(A))$ be the infinitesimal generator of a strongly continuous semigroup
$(T(t))_{t\geq0}$ on $X$ and assume that the restricted semigroup $(T(t)_{|})_{t\geq0}$ is strongly
continuous on some $(T(t))_{t\geq0}$-invariant Banach space $Y\hookrightarrow X$. Then the infinitesimal generator
of $(T(t)|)_{t\geq0}$ is the part $(A_{|},D(A_{|}))$ of $A$ in $Y $.
\end{lemma}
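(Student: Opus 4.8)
The plan is to identify the generator $(B,D(B))$ of the restricted semigroup $(T(t)|)_{t\geq0}$ on $Y$ with the part $(A_|,D(A_|))$ of $A$ in $Y$, recalling that by definition $D(A_|)=\{y\in Y\cap D(A): Ay\in Y\}$ and $A_|y=Ay$ for such $y$. I would establish this by proving the two inclusions $B\subseteq A_|$ and $A_|\subseteq B$ separately; the technical core is a comparison of the resolvents of $A$ and $B$ on $Y$ through their Laplace transform representations.

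First I would prove $B\subseteq A_|$, which is routine. Let $y\in D(B)$, so $\frac1t(T(t)y-y)\to By$ in $Y$ as $t\downarrow 0$. Since $Y\hookrightarrow X$ continuously, the same convergence holds in $X$; hence $y\in D(A)$ and $Ay=By$. As $By\in Y$, we get $Ay\in Y$, so $y\in D(A_|)$ and $A_|y=By$.

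The heart of the argument — and the step I expect to be the main obstacle — is showing the resolvents are compatible on $Y$. Fix a real $\omega$ exceeding the growth bounds of both $(T(t))_{t\geq0}$ and $(T(t)|)_{t\geq0}$. Then for $\Re\lambda>\omega$ one has $\lambda\in\rho(A)\cap\rho(B)$, with both resolvents given by absolutely convergent Bochner integrals
$$R(\lambda,A)x=\int_0^{\infty}e^{-\lambda s}T(s)x\,ds\quad(x\in X),\qquad R(\lambda,B)y=\int_0^{\infty}e^{-\lambda s}T(s)y\,ds\quad(y\in Y).$$
Since $T(s)$ restricted to $Y$ equals $T(s)|$ and the continuous embedding carries the $Y$-valued integral to the corresponding $X$-valued one, it follows that $R(\lambda,B)y=R(\lambda,A)y$ for every $y\in Y$ and every such $\lambda$; in particular $R(\lambda,A)$ maps $Y$ into $D(B)$.

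Finally I would deduce $A_|\subseteq B$. Given $y\in D(A_|)$, choose $\lambda$ with $\Re\lambda>\omega$ and put $x:=(\lambda-A)y$; then $x\in Y$ because $y\in Y$ and $Ay\in Y$. By the previous step, $y=R(\lambda,A)x=R(\lambda,B)x\in D(B)$ and $(\lambda-B)y=x=(\lambda-A)y$, so $By=Ay=A_|y$. Together with the first inclusion this gives $B=A_|$, as claimed. Note that only the standard Laplace-transform description of the resolvent of a $C_0$-semigroup is used, not the explicit operator $B(\lambda,t)$ of Lemma \ref{l1}.
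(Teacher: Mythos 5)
Your argument is correct. The paper itself gives no proof of this lemma --- it is quoted verbatim from Engel--Nagel \cite{En} --- and your proof is essentially the standard one found there (Proposition II.2.3 of that book): the easy inclusion $B\subseteq A_|$ from continuity of the embedding, the identification of the two resolvents via the Laplace-transform representation for $\Re\lambda$ beyond both growth bounds, and then $A_|\subseteq B$ by applying $R(\lambda,B)=R(\lambda,A)|_Y$ to $(\lambda-A)y\in Y$. All three steps are sound, including the one genuinely delicate point you flag: that the two growth bounds may differ, so $\omega$ must dominate both, and that the continuous embedding commutes with the $Y$-valued Bochner integral.
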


\noindent The following two lemmas which are proved in \cite{Aaz} will be used in the sequel.

\begin{lemma}\cite[Lemma 3.1]{Aaz}\label{l3}
Let $A$ the infinitesimal generator of $C_0$-semigroup $T(t)_{t\geq0}$ and $B(\lambda;t)=\int_{0}^{t}e^{\lambda(t-s)}T(s)ds $
is a  linear bounded operator on $X$.
Then there exist $C$ and $D $ tow operator such that $(\lambda -A),B(\lambda,t),C,D$ are mutually commuting operators, for all $x\in D(A)$
and $C(\lambda -A)+DB(\lambda,t)=I,$ for $t>0$.
\end{lemma}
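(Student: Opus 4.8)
The plan is to write down $C$ and $D$ explicitly and then check the two required properties by direct computation. To find the right candidates, replace $A$ by a scalar $z$: the operator $B(\lambda,t)$ becomes the entire function $b(z)=\dfrac{e^{\lambda t}-e^{tz}}{\lambda-z}$, which satisfies $b(\lambda)=te^{\lambda t}\neq 0$, so $\lambda-z$ and $b(z)$ have no common zero and one can solve $c(z)(\lambda-z)+d(z)b(z)=1$ with entire $c,d$: take $d\equiv\dfrac{e^{-\lambda t}}{t}$, which makes $\dfrac{e^{-\lambda t}}{t}b(z)-1$ vanish at $z=\lambda$, and then $c(z)=\dfrac{1-\frac{e^{-\lambda t}}{t}b(z)}{\lambda-z}=\dfrac1t\cdot\dfrac{e^{t(z-\lambda)}-1-t(z-\lambda)}{(z-\lambda)^{2}}=\dfrac1t\int_{0}^{t}\int_{0}^{\sigma}e^{r(z-\lambda)}\,dr\,d\sigma$ is entire. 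This dictates the choice
\[
D:=\frac{e^{-\lambda t}}{t}\,I,\qquad C:=\frac1t\int_{0}^{t}\int_{0}^{\sigma}e^{-\lambda r}T(r)\,dr\,d\sigma=\frac1t\int_{0}^{t}(t-r)e^{-\lambda r}T(r)\,dr,
\]
both bounded operators on $X$ (the last equality is Fubini, and $t>0$ is used here).

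Commutativity is then essentially formal. Since $C$ is a (strongly defined) integral of the operators $T(r)$ and $D$ is a scalar multiple of $I$, the operators $C$ and $D$ commute with one another, with every $T(s)$, and hence with $B(\lambda,t)=\int_{0}^{t}e^{\lambda(t-s)}T(s)\,ds$; moreover $D$ commutes with $\lambda-A$ trivially, and because $A$ is closed with $AT(r)x=T(r)Ax$ for $x\in D(A)$, one may move $A$ through the integral to get $Cx\in D(A)$ and $ACx=CAx$ for $x\in D(A)$. Combined with Lemma~\ref{l1}, which yields $(\lambda-A)B(\lambda,t)x=B(\lambda,t)(\lambda-A)x$ for $x\in D(A)$, this shows that $\lambda-A$, $B(\lambda,t)$, $C$, $D$ are mutually commuting on $D(A)$.

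It remains to verify $C(\lambda-A)x+DB(\lambda,t)x=x$ for $x\in D(A)$. The key point is that, for $x\in D(A)$, the map $r\mapsto e^{-\lambda r}T(r)x$ has derivative $-e^{-\lambda r}T(r)(\lambda-A)x$, so by the fundamental theorem of calculus
\[
\int_{0}^{\sigma}e^{-\lambda r}T(r)(\lambda-A)x\,dr=-\bigl[e^{-\lambda r}T(r)x\bigr]_{r=0}^{r=\sigma}=x-e^{-\lambda\sigma}T(\sigma)x.
\]
Integrating in $\sigma$ over $[0,t]$ and dividing by $t$ gives $C(\lambda-A)x=x-\frac1t\int_{0}^{t}e^{-\lambda\sigma}T(\sigma)x\,d\sigma=x-\frac{e^{-\lambda t}}{t}B(\lambda,t)x=x-DB(\lambda,t)x$, which is exactly the asserted identity; since $DB(\lambda,t)$ is bounded, this also fixes the meaning of $C(\lambda-A)+DB(\lambda,t)=I$ on all of $X$.

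I expect no serious obstacle here: the only non-mechanical step is guessing the closed form of $C$, which the scalar B\'ezout computation above makes transparent, and the only thing requiring care is the unbounded-operator bookkeeping --- keeping every commutation relation that involves $A$ on $D(A)$, and using the closedness of $A$ to pull it past the integral defining $C$. Everything else is a routine manipulation of the semigroup.
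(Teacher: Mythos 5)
Your construction is correct: with $D=\frac{e^{-\lambda t}}{t}I$ and $C=\frac1t\int_{0}^{t}(t-r)e^{-\lambda r}T(r)\,dr$ one has $DB(\lambda,t)x=\frac1t\int_{0}^{t}e^{-\lambda s}T(s)x\,ds$, and your fundamental-theorem-of-calculus computation of $C(\lambda-A)x$ for $x\in D(A)$ does yield $x-DB(\lambda,t)x$, so the B\'ezout identity holds on $D(A)$ and extends by boundedness; the commutation relations are handled correctly (scalar-weighted integrals of the $T(r)$ commute among themselves and with $B(\lambda,t)$, and closedness of $A$ lets you pull $A$ through the integral on $D(A)$). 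Note, however, that this paper states the lemma with only a citation to \cite[Lemma 3.1]{Aaz} and supplies no proof of its own, so there is nothing in the present text to compare your argument against; what you have produced is a self-contained derivation of the identity $x=\frac1t\int_{0}^{t}e^{-\lambda s}T(s)x\,ds+\frac1t\int_{0}^{t}(t-s)e^{-\lambda s}T(s)(\lambda-A)x\,ds$, which is the standard form of this lemma in the semigroup literature, and your scalar ``replace $A$ by $z$'' heuristic is a clean way to motivate the otherwise unmotivated choice of $C$ and $D$. The only point worth tightening is the claim that $Cx\in D(A)$: for $x\in D(A)$ this follows as you say from $AT(r)x=T(r)Ax$ and closedness of $A$, which is all the lemma's statement (commutativity on $D(A)$) requires.
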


\begin{lemma}\cite[Lemma 3.2]{Aaz}\label{l4}
Let $(\lambda-A),B(\lambda,t),~~C,~~D$ be mutually commuting operators in $D(A)$ such that
$C(\lambda -A)+DB(\lambda,t)=I$, $t>0.$ Then we have:
\begin{enumerate}
 \item For every  positive integer $n$ there are $C_n,D_n \in D(A)$ such that\\
  $(\lambda-A)^{n},B^{n}(\lambda,t),C_{n},D_{n}$ are mutually
commuting and $$(\lambda-A)^{n}C_{n} + B^{n}(\lambda,t)D_n = I.$$
\item For every positive integer $n,$
$R(e^{\lambda t}-T(t))^{n} = R(\lambda-A)^{n} \bigcap R(B^{n}(\lambda,t))$ and
                                        $N((e^{\lambda t}-T(t))^{n}) = N((\lambda-A)^{n}) + N(B^{n}(\lambda,t)).$
Further $R^{\infty}(e^{\lambda t}-T(t)) = R^{\infty}(\lambda-A) \bigcap R^{\infty}(B(\lambda,t))$
and $N^{\infty}(e^{\lambda t}-T(t)) = N^{\infty}(\lambda-A) + N^{\infty}(B(\lambda,t)).$
\item $N^{\infty}(\lambda-A) \subset R^{\infty}(B(\lambda,t)) $and $N^{\infty}(B(\lambda,t))\subset  R^{\infty}(\lambda-A).$
                                     \end{enumerate}
\end{lemma}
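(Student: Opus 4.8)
The plan is to derive all three assertions from the single identity $C(\lambda-A)+DB(\lambda,t)=I$ of Lemma~\ref{l3}, together with the factorisations $(e^{\lambda t}-T(t))^{n}=(\lambda-A)^{n}B^{n}(\lambda,t)$ on $X$ and $(e^{\lambda t}-T(t))^{n}=B^{n}(\lambda,t)(\lambda-A)^{n}$ on $D(A^{n})$ supplied by Lemma~\ref{l1}. For part~(1) I would set $u=C(\lambda-A)$ and $v=DB(\lambda,t)$, observe that these commute with one another and with $(\lambda-A)$ and $B(\lambda,t)$ and satisfy $u+v=I$, and expand $I=(u+v)^{2n-1}$ by the binomial theorem. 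Each monomial $\binom{2n-1}{j}u^{j}v^{2n-1-j}$ has $j\ge n$ or $2n-1-j\ge n$; writing $u^{j}=C^{j}(\lambda-A)^{j}$ and $v^{j}=D^{j}B^{j}(\lambda,t)$ and collecting the two families of monomials (pulling a factor $(\lambda-A)^{n}$, respectively $B^{n}(\lambda,t)$, to the front by commutativity) yields $I=(\lambda-A)^{n}C_{n}+B^{n}(\lambda,t)D_{n}$, where $C_{n}$ and $D_{n}$ are polynomial combinations of $C,D,(\lambda-A),B(\lambda,t)$; in particular they commute with $(\lambda-A)^{n}$ and $B^{n}(\lambda,t)$ and inherit the same domain properties as $C$ and $D$, which is what the statement means by $C_{n},D_{n}\in D(A)$.

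For part~(2), the inclusions $R((e^{\lambda t}-T(t))^{n})\subseteq R((\lambda-A)^{n})\cap R(B^{n}(\lambda,t))$ and $N((\lambda-A)^{n})+N(B^{n}(\lambda,t))\subseteq N((e^{\lambda t}-T(t))^{n})$ are read off immediately from the two factorisations. For the reverse inclusions I would feed the identity of part~(1) into the picture. If $y=(\lambda-A)^{n}a=B^{n}(\lambda,t)b$, substituting $y$ into $y=(\lambda-A)^{n}C_{n}y+B^{n}(\lambda,t)D_{n}y$ and using commutativity gives $y=(\lambda-A)^{n}B^{n}(\lambda,t)(C_{n}b+D_{n}a)=(e^{\lambda t}-T(t))^{n}(C_{n}b+D_{n}a)$. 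If $(e^{\lambda t}-T(t))^{n}x=0$, then $x=(\lambda-A)^{n}C_{n}x+B^{n}(\lambda,t)D_{n}x$, and one checks that $B^{n}(\lambda,t)D_{n}x\in N((\lambda-A)^{n})$ and $(\lambda-A)^{n}C_{n}x\in N(B^{n}(\lambda,t))$ by commuting the outer factor inward and invoking $(\lambda-A)^{n}B^{n}(\lambda,t)x=0$. Intersecting these level-$n$ equalities over all $n$ (using $R(S^{n+1})\subseteq R(S^{n})$) and taking unions over all $n$ (using $N(S^{n})\subseteq N(S^{n+1})$) then yields the hyper-range and hyper-kernel identities.

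For part~(3), given $x\in N^{\infty}(\lambda-A)$ with $(\lambda-A)^{k}x=0$, I would apply the identity of part~(1) at each level $m$: for $m\ge k$ the term $(\lambda-A)^{m}C_{m}x$ vanishes, so $x=B^{m}(\lambda,t)D_{m}x\in R(B^{m}(\lambda,t))$, while for $m<k$ one has $R(B^{m}(\lambda,t))\supseteq R(B^{k}(\lambda,t))\ni x$; hence $x\in R^{\infty}(B(\lambda,t))$, and $N^{\infty}(B(\lambda,t))\subseteq R^{\infty}(\lambda-A)$ follows symmetrically from $x=(\lambda-A)^{m}C_{m}x$ for $m$ large. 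The algebra throughout is just a B\'ezout/binomial manipulation; the part that will genuinely require care is the domain bookkeeping --- verifying that $C_{n},D_{n}$ and every composite product written above are well defined on the relevant subspaces of $D(A^{n})$ and of $X$, and that the commutation relations persist there. This rests on the point, implicit in Lemma~\ref{l3} and visible from the explicit form of $B(\lambda,t)=\int_{0}^{t}e^{\lambda(t-s)}T(s)\,ds$, that $B(\lambda,t)$ (and the auxiliary $C,D$ arising from its $\lambda$-derivative) map $X$ into $D(A)$ and, inductively, $D(A^{k})$ into $D(A^{k+1})$.
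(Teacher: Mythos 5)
The paper does not prove this lemma at all: it is quoted verbatim from \cite[Lemma 3.2]{Aaz} and used as a black box, so there is no in-paper argument to compare against. Your proposal is the standard and correct route: the binomial expansion of $\bigl(C(\lambda-A)+DB(\lambda,t)\bigr)^{2n-1}$ for part (1), the B\'ezout identity combined with the factorisations $(e^{\lambda t}-T(t))^{n}=(\lambda-A)^{n}B^{n}(\lambda,t)=B^{n}(\lambda,t)(\lambda-A)^{n}$ for the range/kernel equalities in part (2), and the vanishing of the $(\lambda-A)^{m}C_{m}x$ term for large $m$ in part (3); all the individual steps check out, including the passage to hyper-ranges and hyper-kernels via nestedness of the ranges and kernels. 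The only point of substance you leave open is the one you explicitly flag, namely the domain bookkeeping for the unbounded operator $\lambda-A$ (the lemma's phrase ``operators in $D(A)$'' is itself loose), and your justification via $B(\lambda,t):X\to D(A)$ is the right way to close it; I would only add that for the kernel inclusions no such care is needed, since any $u$ with $(\lambda-A)^{n}u=0$ automatically lies in $D(A^{n})$.
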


\noindent Now, we give some  spectral results for differentiable $C_0$ semigroup.

\begin{lemma}\label{l5}
Let $(A,D(A))$ be the infinitesimal generator of a strongly continuous semigroup
$(T(t))_{t\geq0}$, $B(\lambda,t)=\int_{0}^{t}e^{\lambda(t-s)}T(s)xds$ is  a bounded operator  from $X$ to $D(A).$
If $T(t)$ is differentiable for $t > t_0,$
then for every $\lambda \in\mathbb{C}$, $t > t_0$ and $n\in \mathbb{N}$,
 the following statements hold:
\begin{enumerate}
   \item $(a):(\lambda e^{\lambda t}- AT(t))^{n}(x)=(\lambda-A)^{n}B(\lambda,t)^{'n}(x) ,~~~~~~\lambda \in\mathbb{C},x\in X;$\\
   $(b):(\lambda e^{\lambda t}- T(t))^{n}(x)=B(\lambda,t)^{'n}(\lambda-A)^{n}(x)  ,~~~~~~\lambda \in\mathbb{C},x\in D(A).$
                                                     \item $ R(\lambda e^{\lambda t} -AT(t))^{n}\subseteq R(\lambda-A)^{n}. $
                                                     \item $ N(\lambda-A)^{n}\subseteq N(\lambda e^{\lambda t} -AT(t))^{n}.$
                                          \end{enumerate}
\end{lemma}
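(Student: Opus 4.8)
The plan is to mimic exactly the strategy of Lemma~\ref{l1}, replacing the operator $B(\lambda,t)$ by its $t$-derivative $B'(\lambda,t)$ and replacing the semigroup $T(t)$ by $AT(t)$, and then to read off the range inclusion (2) and kernel inclusion (3) as immediate consequences of the factorizations in (1). The starting point is the identity recalled in the preliminaries, $B'(\lambda,t)x = T(t)x + \lambda B(\lambda,t)x$, valid for $t > t_0$ since the semigroup is differentiable there; equivalently $\lambda e^{\lambda t} - AT(t)$ should play the role that $e^{\lambda t} - T(t)$ plays in Lemma~\ref{l1}. Concretely, I would first establish the case $n=1$ of part (1)(a): starting from $(e^{\lambda t} - T(t))x = (\lambda - A)B(\lambda,t)x$ (Lemma~\ref{l1}(1)(a) with $n=1$), differentiate both sides in $t$. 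The left-hand side differentiates to $\lambda e^{\lambda t}x - \frac{d}{dt}T(t)x = \lambda e^{\lambda t}x - AT(t)x$ (using differentiability of $T(t)$ for $t>t_0$ and $\frac{d}{dt}T(t)x = AT(t)x$), while the right-hand side, since $\lambda - A$ is a closed operator independent of $t$, differentiates to $(\lambda - A)B'(\lambda,t)x$. This gives $(\lambda e^{\lambda t} - AT(t))x = (\lambda - A)B'(\lambda,t)x$. Part (1)(b) for $n=1$ is obtained the same way by differentiating Lemma~\ref{l1}(1)(b), using that on $D(A)$ one may pull $\lambda - A$ out on the right.

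For general $n$, I would proceed by induction, using that $\lambda - A$ (more precisely its appropriate powers on $D(A^n)$) and $B'(\lambda,t)$ commute on the relevant domain — this commutation is inherited from the commutation of $\lambda - A$ with $B(\lambda,t)$ and with $T(t)$, hence with $B'(\lambda,t) = T(t) + \lambda B(\lambda,t)$. Thus $(\lambda e^{\lambda t} - AT(t))^n = \big((\lambda - A)B'(\lambda,t)\big)^n = (\lambda - A)^n B'(\lambda,t)^n$ on $X$ for (a), and the analogous rearrangement on $D(A^n)$ for (b). Once (1) is in hand, (2) is immediate: every $y \in R(\lambda e^{\lambda t} - AT(t))^n$ is of the form $(\lambda - A)^n B'(\lambda,t)^n x$, hence lies in $R(\lambda - A)^n$. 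Likewise (3): if $x \in N(\lambda - A)^n$, then $x \in D(A^n)$ and by (1)(b), $(\lambda e^{\lambda t} - AT(t))^n x = B'(\lambda,t)^n(\lambda - A)^n x = 0$, so $x \in N(\lambda e^{\lambda t} - AT(t))^n$.

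The main obstacle is the bookkeeping around unbounded operators and domains: $A$ is only densely defined and closed, so differentiating an identity whose right-hand side contains $\lambda - A$ requires a justification (the closedness of $A$ together with the fact that both $B(\lambda,t)x$ and its $t$-derivative $B'(\lambda,t)x$ lie in $D(A)$, which holds because $B(\lambda,t)$ maps into $D(A)$ and the formula $B'(\lambda,t)x = T(t)x + \lambda B(\lambda,t)x$ shows $B'(\lambda,t)x \in D(A)$ exactly when $T(t)x \in D(A)$ — guaranteed for $t > t_0$ by differentiability, since $\frac{d}{dt}T(t)x = AT(t)x$ forces $T(t)x \in D(A)$). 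One must also be careful that in part (1)(a), as written, the second factor is $B(\lambda,t)^{'n}$, which I read as $\big(B'(\lambda,t)\big)^n$ rather than the $n$-th derivative; and that the left-hand side of (1)(b) should consistently read $\lambda e^{\lambda t} - AT(t)$ to match (a) and the factorization. Modulo these domain checks — all of which reduce to closedness of $A$ and the differentiability hypothesis $t > t_0$ — the argument is a routine differentiation of Lemma~\ref{l1}.
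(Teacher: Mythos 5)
Your proposal is correct and follows essentially the same route as the paper: differentiate the identities of Lemma~\ref{l1}(1) with respect to $t$ to get the case $n=1$, extend to general $n$ by induction using commutativity, and read off (2) and (3) from the factorizations in (1). Your additional care with domains, the closedness of $A$, and the typo in (1)(b) only makes the argument more complete than the paper's terse version.
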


\begin{proof}
Assuming now that $t > t_0$ and
differentiating $(a)$ and $(b)$ in $(1)$ of Lemma \ref{l1} with respect to $t$ and $n=1$, we obtain
\begin{eqnarray*}
\lambda e^{\lambda t}x - AT(t)x &=& (\lambda I - A)B^{'}(\lambda,t)x ~~~~\mbox{for every}~~ x \in X;\\
\lambda e^{\lambda t}x - AT(t)x &=&B^{'}(\lambda,t)x(\lambda I - A) ~~~~\mbox{for every}~~ x \in D(A) .
\end{eqnarray*}
This gives $(a)$ and $(b)$ for $n=1.$
By induction, we obtain $(a)$ and $(b)$ for all $n\in \mathbb{N}.$ The rest of Lemma follows from $(1).$
\end{proof}

\begin{proposition}
Let $T( t)_{t>0}$ be a $C_0$ semigroup and let $A$ be its infinitesimal
generator. If $T(t)$ is differentiable for $t > t_0$ and $\lambda \in \sigma_{A}(x)$ for $x\in X,$ then
$$\lambda e^{\lambda t} \in \sigma_{AT(t)}(x).$$
\end{proposition}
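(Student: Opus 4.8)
The plan is to prove the contrapositive: I would show that if $\lambda e^{\lambda t} \in \rho_{AT(t)}(x)$, then $\lambda \in \rho_A(x)$. So suppose there is an open neighborhood $U$ of $\lambda e^{\lambda t}$ and an analytic function $g : U \to X$ with $(\mu - AT(t))g(\mu) = x$ for all $\mu \in U$. The goal is to manufacture from $g$ an analytic local resolvent for $A$ at $x$ near $\lambda$.

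The key tool is Lemma \ref{l5}(1), which for $n=1$ gives the factorization $\lambda e^{\lambda t} - AT(t) = (\lambda - A)B'(\lambda,t)$ with the factors commuting on $D(A)$, together with the commuting companions $C, D$ coming (via the derivative version) from Lemma \ref{l3}: I would first check that differentiating the identity $C(\lambda-A) + DB(\lambda,t) = I$ in $t$ still yields, after the differentiation, operators that let me write $I$ as a combination involving $(\lambda - A)$ and $B'(\lambda,t)$ — more precisely, I expect to use that $C, D, (\lambda-A), B'(\lambda,t)$ remain mutually commuting, which is the delicate point. Then, exactly as in the local-spectrum characterization recalled in the introduction, from $g$ analytic near $\lambda e^{\lambda t}$ I build a power series $f(z) = \sum x_i (z - \lambda)^i$ with $x_0 = x$, $(\lambda - A)x_{i+1} = x_i$ and $\sup_i \|x_i\|^{1/i} < \infty$: concretely, set $x_{i+1} = C_{i+1} x + (\text{correction term from } g)$, using that $B'(\lambda,t)$ maps into the right space and commutes, so that applying $(\lambda - A)$ telescopes correctly and the growth bound on the $x_i$ is inherited from the analyticity radius of $g$ and the norm bound on $B'(\lambda,t)$ (which is a bounded operator by the hypothesis recalled before Lemma \ref{l1}).

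The main obstacle I anticipate is twofold. First, organizing the bookkeeping so that the factorization $\lambda e^{\lambda t} - AT(t) = (\lambda - A)B'(\lambda,t)$ can be \emph{inverted} locally: one has a right-hand factor $B'(\lambda,t)$ that is not invertible, so I cannot simply divide, and I must instead use the Bezout-type identity $(\lambda-A)C' + B'(\lambda,t)D' = I$ (the differentiated analogue of Lemmas \ref{l3}--\ref{l4}) to split $x$ and push the $B'$-part into the local resolvent already provided by $g$. Second, making the analytic dependence rigorous: I need the map $z \mapsto$ (the constructed series) to be analytic on a genuine neighborhood of $\lambda$, which amounts to controlling $\sup_i \|x_i\|^{1/i}$, and here I would use $\|B'(\lambda,t)\| < \infty$ together with the radius of convergence of $g$ around $\lambda e^{\lambda t}$ and the local Lipschitz behavior of $z \mapsto z e^{zt}$ near $z = \lambda$ (so that a disc around $\lambda$ maps into $U$).

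Alternatively, if the direct construction proves awkward, I would argue more softly: apply Lemma \ref{l5}(1)(b) with $n=1$ to $f(\mu)$ (the would-be local resolvent of $A$ at $x$) to relate $(\mu - A)f(\mu)$ and $(\mu e^{\mu t} - AT(t))$-type expressions, and transport the hypothesis $\lambda \in \sigma_A(x)$ through the substitution $\mu \mapsto \mu e^{\mu t}$; since this map is an open analytic map near $\lambda$ (its derivative $e^{\lambda t}(1 + \lambda t)$ is typically nonzero, and the degenerate case can be handled separately), non-analyticity of the local resolvent at $\lambda$ forces non-analyticity of the corresponding resolvent for $AT(t)$ at $\lambda e^{\lambda t}$, i.e. $\lambda e^{\lambda t} \in \sigma_{AT(t)}(x)$. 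Either way, the heart of the matter is the commuting factorization from Lemma \ref{l5} combined with the Bezout identity, and that is the step I would spend the most care on.
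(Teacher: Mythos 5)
Your overall strategy (prove the contrapositive and exploit the factorization $\lambda e^{\lambda t}-AT(t)=(\lambda-A)B'(\lambda,t)$ from Lemma \ref{l5}) is the right starting point and coincides with the paper's, but the decisive step is missing: you never actually produce the object that certifies $\lambda\in\rho_A(x)$. The prescription ``set $x_{i+1}=C_{i+1}x+(\mbox{correction term from } g)$'' is a placeholder, not a construction, and the differentiated Bezout identity you lean on is not established: differentiating $C(\lambda-A)+DB(\lambda,t)=I$ in $t$ yields an identity containing $DB'(\lambda,t)$ summed with derivative terms equal to $0$, not an identity of the form $(\lambda-A)C'+B'(\lambda,t)D'=I$, so the Bezout apparatus you single out as ``the heart of the matter'' is both unproved and, as it turns out, unnecessary. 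Your fallback argument is in worse shape: the claim that non-analyticity of the local resolvent of $A$ at $\lambda$ forces non-analyticity of that of $AT(t)$ at $\lambda e^{\lambda t}$ via openness of $z\mapsto ze^{zt}$ presupposes a functional relation between the two local resolvent functions, and constructing such a relation is precisely the content of the proposition; openness of the scalar map alone gives nothing.

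The paper's proof shows how little is actually needed. Assume $\lambda e^{\lambda t}\notin\sigma_{AT(t)}(x)$ and invoke the sequence characterization of the local resolvent recalled in the introduction: there is a sequence $(x_i)_{i\geq0}$ with $x_0=x$, $(\lambda e^{\lambda t}-AT(t))x_i=x_{i-1}$ and $\sup_i\|x_i\|^{1/i}<\infty$. Then simply put $y_i:=B'(\lambda,t)^{i}x_i$. The factorization and the commutation of $B'(\lambda,t)$ with $\lambda e^{\lambda t}-AT(t)$ give $(\lambda-A)y_i=(\lambda e^{\lambda t}-AT(t))B'(\lambda,t)^{i-1}x_i=B'(\lambda,t)^{i-1}x_{i-1}=y_{i-1}$, and boundedness of $B'(\lambda,t)$ gives $\|y_i\|\leq M^{i}\|x_i\|$ with $M:=\|B'(\lambda,t)\|$, hence $\sup_i\|y_i\|^{1/i}<\infty$; therefore $\lambda\notin\sigma_A(x)$. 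No Bezout identity, no splitting of $x$, and no analysis of $z\mapsto ze^{zt}$ is required. To repair your write-up, replace your second and third paragraphs by this one-line definition of $y_i$ and the two verifications above.
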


\begin{proof}
Let $t>t_{0}$ be fixed and suppose that $\lambda e^{\lambda t}\notin \sigma_{AT(t_0)}(x)$, then there exist
a sequence $(x_i)_{i\in \mathbb{N}}$ of $X$
such that $x_0=x$, $(\lambda e^{\lambda t}-AT(t))x_i=x_{i-1}$
and $\displaystyle{\sup_i}\|x_i\|^{\frac{1}{i}}<\infty.$\\
We put $y_i=B^{'i}(\lambda,t)x_i$, as $B^{'}(\lambda,t)$ is a bounded linear operator in $X$, we have $y_0=x_0=x,~~y_0\in D(A)$.\\
\begin{eqnarray*}
  (\lambda-A)y_i &=& (\lambda-A)B^{'}(\lambda,t)x_i B^{'(i-1)}(\lambda,t)x_i \\
   &=& (\lambda e^{\lambda t}-AT(t))B^{'(i-1)}(\lambda,t)x_i \\
   &=& B^{'(i-1)}(\lambda,t)(\lambda e^{\lambda t}-AT(t))x_i \\
   &=& B^{'(i-1)}(\lambda,t)x_{i-1}\\
   &=& y_{i-1}
\end{eqnarray*}

Therefor $(\lambda-A)y_i=y_{i-1}.$
On the other hand $$\|y_i\|=\| B^{'i}(\lambda,t)x_i\|<\|B^{'i}(\lambda,t)\| \|x_i\|<M^i\|x_i\|,$$
then $$\displaystyle{\sup_i}\|y_i\|^{\frac{1}{i}}< \displaystyle{\sup_i} M\|x_i\|^{\frac{1}{i}}<\infty.$$
So that $\lambda \notin \sigma_A(x)$
\end{proof}

\noindent Denote by $\sigma_{su}(A)$ the subjectivity spectrum of $A$. It is known that
$\displaystyle{\bigcup_{x\in X}}\sigma_{A}(x)= \sigma_{su}(A)$ for a closed operator $A.$
Hence the following corollary holds.

\begin{corollary}
Let $T( t)_{t>0}$ be a $C_0$ semigroup and let $A$ be its infinitesimal
generator. If $T(t)$ is differentiable for $t > t_0$ and $\lambda \in \sigma_{su}(A),$ then
$\lambda e^{\lambda t} \in \sigma_{su}(AT(t)).$
\end{corollary}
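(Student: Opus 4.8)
The corollary follows almost immediately from the preceding proposition by a standard localization argument, so my plan is to reduce it to that proposition.

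\medskip

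First I would recall the quoted identity $\bigcup_{x\in X}\sigma_A(x)=\sigma_{su}(A)$, valid for the closed operator $A$ (and likewise for the bounded operator $AT(t)$, which is bounded since $T(t)$ is differentiable for $t>t_0$, hence $T(t)$ maps $X$ into $D(A)$ and $AT(t)\in\mathcal B(X)$ for $t>t_0$). So let $\lambda\in\sigma_{su}(A)$. By the first identity there exists $x\in X$ with $\lambda\in\sigma_A(x)$. Now I would apply the Proposition directly to this $x$: since $T(t)$ is differentiable for $t>t_0$ and $\lambda\in\sigma_A(x)$, the Proposition yields $\lambda e^{\lambda t}\in\sigma_{AT(t)}(x)$ for every $t>t_0$. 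Finally, invoking $\bigcup_{x\in X}\sigma_{AT(t)}(x)=\sigma_{su}(AT(t))$ (again the same identity, now for the bounded operator $AT(t)$), we get $\lambda e^{\lambda t}\in\sigma_{su}(AT(t))$, which is exactly the assertion.

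\medskip

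I do not anticipate a genuine obstacle here; the content is entirely in the Proposition, and the corollary is the routine passage from local spectra to the surjectivity spectrum by taking the union over $x$. The only point worth a word of care is making sure $AT(t)$ is a bona fide bounded operator so that the formula $\bigcup_x\sigma_{AT(t)}(x)=\sigma_{su}(AT(t))$ applies in the form stated; this is guaranteed by differentiability of the semigroup for $t>t_0$, as noted in the preliminaries. One could equally phrase the argument contrapositively: if $\lambda e^{\lambda t}\notin\sigma_{su}(AT(t))$ then $\lambda e^{\lambda t}\notin\sigma_{AT(t)}(x)$ for all $x$, whence by the Proposition $\lambda\notin\sigma_A(x)$ for all $x$, i.e.\ $\lambda\notin\sigma_{su}(A)$.
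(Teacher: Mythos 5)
Your argument is exactly the one the paper intends: it derives the corollary from the preceding proposition via the identity $\bigcup_{x\in X}\sigma_{A}(x)=\sigma_{su}(A)$, applied to the closed operator $A$ and to the bounded operator $AT(t)$. The reasoning is correct and matches the paper's (one-line) justification, with the added care about boundedness of $AT(t)$ being a reasonable precaution.
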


\begin{proposition}
Let $T( t)_{t>0}$ be a $C_0$ semigroup and let $A$ be its infinitesimal
generator. If $T(t)$ is differentiable for $t > t_0$ and $\lambda \in \sigma_{ap}(A)$,  then
$\lambda e^{\lambda t} \in \sigma_{ap}(AT(t)).$
\end{proposition}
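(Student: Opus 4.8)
The plan is to mirror the argument just used for the surjectivity (local) spectrum, but now working with approximate eigenvectors instead of local resolvent expansions. Recall that $\lambda\in\sigma_{ap}(A)$ means there is a sequence of unit vectors $(x_k)_{k\in\mathbb N}\subset D(A)$ with $\|(\lambda-A)x_k\|\to 0$. The goal is to produce a sequence of (approximately) unit vectors witnessing $\lambda e^{\lambda t}\in\sigma_{ap}(AT(t))$, i.e.\ vectors $y_k$ with $\|y_k\|$ bounded below and $\|(\lambda e^{\lambda t}-AT(t))y_k\|\to 0$.

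First I would fix $t>t_0$ and apply Lemma \ref{l5}(1)(b) with $n=1$: for $x\in D(A)$ we have $(\lambda e^{\lambda t}-AT(t))x = B'(\lambda,t)(\lambda-A)x$. Applying this with $x=x_k$ gives $\|(\lambda e^{\lambda t}-AT(t))x_k\| = \|B'(\lambda,t)(\lambda-A)x_k\| \le \|B'(\lambda,t)\|\,\|(\lambda-A)x_k\| \to 0$, since $B'(\lambda,t)$ is a bounded operator on $X$. Thus the same sequence $(x_k)$ serves directly as an approximate eigensequence for $AT(t)$ at $\lambda e^{\lambda t}$ — the vectors already have norm $1$, so no renormalization is needed. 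This shows $\lambda e^{\lambda t}\in\sigma_{ap}(AT(t))$.

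The one subtlety, and the step I expect to need the most care, is the degenerate case in which the approximate eigensequence $(x_k)$ could be "killed" by $B'(\lambda,t)$ — in principle $\|(\lambda e^{\lambda t}-AT(t))x_k\|\to 0$ follows immediately, but one should make sure that $(x_k)$ does not secretly also satisfy $\|(\lambda e^{\lambda t}-AT(t))x_k\|\to 0$ in a vacuous way that fails to reflect non-invertibility; however, since $\|x_k\|=1$ for all $k$, boundedness below is automatic and there is nothing vacuous. Strictly, one should also note that $(x_k)\subset D(A)=D(AT(t))$ in the relevant sense, which is where differentiability of $T(t)$ for $t>t_0$ enters (it guarantees $AT(t)$ is bounded, or at least that $T(t)X\subset D(A)$, so the composition $AT(t)$ makes sense as in Lemma \ref{l5}). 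With these remarks the proof is complete; I would close by remarking that, exactly as with the surjectivity spectrum, this yields the spectral inclusion $e^{t\sigma_{ap}(A)}\subseteq\sigma_{ap}(AT(t))$ for differentiable semigroups, extending the classical inclusion for $T(t)$ itself.
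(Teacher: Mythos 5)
Your argument is correct and is essentially the same as the paper's: both fix $t>t_0$, use the factorization $(\lambda e^{\lambda t}-AT(t))x=B'(\lambda,t)(\lambda-A)x$ for $x\in D(A)$ from Lemma \ref{l5}(1), and apply it to a normalized approximate eigensequence for $\lambda-A$, using boundedness of $B'(\lambda,t)$ to conclude. The paper states this more tersely, but the content is identical.
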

\begin{proof}
For $t > t_0$, since $\lambda e^{\lambda t}-AT(t)=(\lambda -A)B^{'}(\lambda,t)$
and $B^{'}(\lambda,t)$ is bounded linear operator, then $\lambda e^{\lambda t}-AT(t)$
is a bounded linear operator. It is easy to check that for $x\in D(A)$, $B^{'}(\lambda,t)Ax=AB^{'}(\lambda,t)x.$
  If $\lambda \in \sigma_{ap}(A)$, then there exists sequence $(x_n)_{n\in\mathbb{N}}\in D(A)$  satisfying $||x_n||=1$ and
  $||(\lambda -A)x_n||\rightarrow 0.$
From $(1)$ of lemma \ref{l5}, we obtain the result.
\end{proof}

\noindent By an outline of the proof of \cite[Theorem 2.1]{Te}, we obtain the following result.
\begin{proposition}
Let $T( t)_{t>0}$ be a $C_0$ semigroup and let $A$ be its infinitesimal
generator. If $T(t)$ is differentiable for $t > t_0$ and $\lambda \in \nu(A)$,  then
$\lambda e^{\lambda t} \in \nu(AT(t)).$
where $\nu(.) \in \{\sigma_{\gamma}(.),\sigma_{\pi}(.),\sigma_{\gamma e}(.)\}$ and $\sigma_{\gamma}(.),\sigma_{\pi}(.),\sigma_{\gamma e}(.)$
denote the regular spectrum, essential regular spectrum and left essential spectrum.
\end{proposition}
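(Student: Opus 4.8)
The plan is to imitate the scheme of \cite[Theorem 2.1]{Te} (and of the semi-regular part of \cite{Aaz}), transporting it from the pair $\big(e^{\lambda t}-T(t),\,B(\lambda,t)\big)$ to the pair $\big(\lambda e^{\lambda t}-AT(t),\,B'(\lambda,t)\big)$, where $B'(\lambda,t)=T(t)+\lambda B(\lambda,t)$ is, for $t>t_0$, a bounded linear operator on $X$; this is exactly where differentiability of $T(t)$ for $t>t_0$ is used. The starting point is Lemma \ref{l5}, which for $t>t_0$ provides the factorization $\lambda e^{\lambda t}-AT(t)=(\lambda-A)B'(\lambda,t)=B'(\lambda,t)(\lambda-A)$ on $D(A)$, the commutation $AB'(\lambda,t)x=B'(\lambda,t)Ax$ for $x\in D(A)$ noted above, and the inclusions $R\big((\lambda e^{\lambda t}-AT(t))^{n}\big)\subseteq R\big((\lambda-A)^{n}\big)$ and $N\big((\lambda-A)^{n}\big)\subseteq N\big((\lambda e^{\lambda t}-AT(t))^{n}\big)$.

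Next I would establish the exact analogues of Lemma \ref{l3} and Lemma \ref{l4} with $B'(\lambda,t)$ in place of $B(\lambda,t)$: mutually commuting operators $C',D'$ (commuting also with $\lambda-A$ and $B'(\lambda,t)$) such that $C'(\lambda-A)+D'B'(\lambda,t)=I$ for $t>t_0$; by induction, for each $n$ commuting $C'_{n},D'_{n}$ with $(\lambda-A)^{n}C'_{n}+B^{\prime n}(\lambda,t)D'_{n}=I$; and hence
$$R\big((\lambda e^{\lambda t}-AT(t))^{n}\big)=R\big((\lambda-A)^{n}\big)\cap R\big(B^{\prime n}(\lambda,t)\big),\qquad N\big((\lambda e^{\lambda t}-AT(t))^{n}\big)=N\big((\lambda-A)^{n}\big)+N\big(B^{\prime n}(\lambda,t)\big),$$
the same equalities for the hyper-range and the hyper-kernel, and the ``coprimeness'' inclusions $N^{\infty}(\lambda-A)\subseteq R^{\infty}(B'(\lambda,t))$ and $N^{\infty}(B'(\lambda,t))\subseteq R^{\infty}(\lambda-A)$. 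These are obtained by the same purely algebraic manipulations and induction as in \cite{Aaz}, once one has checked that the commuting-resolvent construction of Lemma \ref{l3} still produces the identity $C'(\lambda-A)+D'B'(\lambda,t)=I$ for the derivative, uniformly in $t>t_0$.

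With these ingredients the three cases are handled uniformly, by contraposition, because $\sigma_{\gamma}$ (regular), $\sigma_{\pi}$ (essential regular) and $\sigma_{\gamma e}$ (left essential) are the complements of operator classes each characterized by closedness of the range (possibly up to a finite-dimensional or finite-codimensional correction) together with a Kato-type inclusion $N(\cdot)\subseteq R^{\infty}(\cdot)$ (again possibly modulo a finite-dimensional subspace). Fix $t>t_0$ and suppose $\lambda e^{\lambda t}\notin\nu(AT(t))$, i.e. $\lambda e^{\lambda t}-AT(t)$ has the relevant property. For the kernel side, Lemma \ref{l5}(3) gives $N(\lambda-A)\subseteq N(\lambda e^{\lambda t}-AT(t))$, and combining this with the assumed property and with $R^{\infty}(\lambda e^{\lambda t}-AT(t))\subseteq R^{\infty}(\lambda-A)$ yields $N(\lambda-A)\subseteq R^{\infty}(\lambda-A)$ (with a finite-dimensional defect in the $\sigma_{\pi}$ and $\sigma_{\gamma e}$ cases, and $\dim N(\lambda-A)<\infty$ in the $\sigma_{\gamma e}$ case). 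For the range side, $R(\lambda e^{\lambda t}-AT(t))=R(\lambda-A)\cap R(B'(\lambda,t))$ is closed, and the identity $C'(\lambda-A)+D'B'(\lambda,t)=I$ together with the mutual hyper-range inclusions is used to promote this to closedness of $R(\lambda-A)$ itself; concretely, $B'(\lambda,t)$ turns out to be bounded below on a suitable $(\lambda-A)$-invariant complement of $N^{\infty}(\lambda-A)$, so that closedness descends from the product to $\lambda-A$. Hence $\lambda-A$ inherits the property, i.e. $\lambda\notin\nu(A)$, which is the contrapositive of the claimed implication $\lambda\in\nu(A)\Rightarrow\lambda e^{\lambda t}\in\nu(AT(t))$. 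I expect the main obstacle to be precisely this last transfer of range-closedness, since one must combine the coprimeness identity and the mutual hyper-range inclusions carefully, and in the two essential cases additionally carry the finite-dimensional bookkeeping through the reduction.
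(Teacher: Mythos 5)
Your overall strategy --- transporting the $\bigl(e^{\lambda t}-T(t),\,B(\lambda,t)\bigr)$ machinery of Lemmas \ref{l3} and \ref{l4} to the differentiated pair $\bigl(\lambda e^{\lambda t}-AT(t),\,B'(\lambda,t)\bigr)$ and then arguing by contraposition --- is exactly what the paper intends: its entire ``proof'' of this proposition is the single sentence that the result follows ``by an outline of the proof of \cite[Theorem 2.1]{Te}'', so in approach you and the paper coincide. The factorization $\lambda e^{\lambda t}-AT(t)=(\lambda-A)B'(\lambda,t)$ from Lemma \ref{l5} is correctly identified as the starting point.

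However, your plan has a genuine gap at its cornerstone. Everything downstream rests on the asserted analogue of Lemma \ref{l3}, namely a commuting coprimeness identity $C'(\lambda-A)+D'B'(\lambda,t)=I$, which you say follows ``by the same construction'' but never verify --- and it cannot hold for all $\lambda$. Indeed, if $x\neq 0$ satisfies $Ax=\lambda x$, then $T(s)x=e^{\lambda s}x$, hence $B(\lambda,t)x=te^{\lambda t}x$ and $B'(\lambda,t)x=T(t)x+\lambda B(\lambda,t)x=(1+\lambda t)e^{\lambda t}x$; for $\lambda=-1/t$ this vanishes, so $x\in N(\lambda-A)\cap N(B'(\lambda,t))$, and applying any identity $C'(\lambda-A)+D'B'(\lambda,t)=I$ of commuting operators to $x$ forces $x=0$, a contradiction. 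So the proposed analogue of Lemma \ref{l4} (the range/kernel decompositions and hyper-range inclusions for the primed pair) is not available uniformly in $\lambda$, and at minimum the exceptional values must be treated separately, which your outline does not do. In addition, the step you yourself flag as the main obstacle --- promoting closedness of $R(\lambda-A)\cap R(B'(\lambda,t))$ to closedness of $R(\lambda-A)$ --- is precisely the substantive content of the semi-regular case, and ``$B'(\lambda,t)$ turns out to be bounded below on a suitable complement'' is an assertion, not an argument. As written, the proposal is a plausible programme rather than a proof; to be fair, the paper itself supplies nothing more.
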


\noindent In the next theorem we will prove  that the spectral inclusion of
$C_0$semi-groups remains true for the Drazin invertible and quasi-Fredholm spectra.

\begin{theorem}\label{td}
Let $(T(t))_{t\geq0}$ a $C_0-$semigroup, with infinitesimal generator $A.$ Then
$$ e^{t\sigma_{D}(A)}\subseteq \sigma_{D}(T(t)).$$
\end{theorem}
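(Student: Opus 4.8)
The plan is to prove the contrapositive: fix $t>0$ and a scalar $\lambda$, and show that if $e^{\lambda t}-T(t)$ is Drazin invertible then $\lambda-A$ is Drazin invertible. Equivalently, writing $\mu=e^{\lambda t}$, I want to pass a finite-ascent/finite-descent property from $e^{\lambda t}-T(t)$ down to $\lambda-A$. The key algebraic tool is Lemma \ref{l4}: via Lemma \ref{l3} there are mutually commuting operators $(\lambda-A),B(\lambda,t),C,D$ (on $D(A)$) with $C(\lambda-A)+DB(\lambda,t)=I$, and for every $n$ one has the decompositions
$$R\big((e^{\lambda t}-T(t))^{n}\big)=R((\lambda-A)^{n})\cap R(B^{n}(\lambda,t)),\qquad N\big((e^{\lambda t}-T(t))^{n}\big)=N((\lambda-A)^{n})+N(B^{n}(\lambda,t)),$$
together with the ``coprimeness'' facts $N^{\infty}(\lambda-A)\subset R^{\infty}(B(\lambda,t))$ and $N^{\infty}(B(\lambda,t))\subset R^{\infty}(\lambda-A)$. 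These last inclusions are exactly what decouples the two factors.

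Next I would extract from the definitions what Drazin invertibility of $e^{\lambda t}-T(t)$ gives: $a:=a(e^{\lambda t}-T(t))<\infty$ and $d:=d(e^{\lambda t}-T(t))<\infty$. First, the ascent: from the sum decomposition of kernels, $N((\lambda-A)^{n})\subseteq N((e^{\lambda t}-T(t))^{n})$ (this is also Lemma \ref{l1}(3)), so $N((\lambda-A)^{a})=N((\lambda-A)^{a+1})$ would follow once I know the reverse containment; to get it I intersect with $N^{\infty}(\lambda-A)$ and use $N^{\infty}(B(\lambda,t))\subset R^{\infty}(\lambda-A)$ to kill the $B$-part of any kernel element of $(\lambda-A)^{k}$, showing $N^{\infty}(\lambda-A)\cap N^{\infty}(B(\lambda,t))=\{0\}$ and hence that the stabilization index of $N((\lambda-A)^{n})$ is at most $a$. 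So $a(\lambda-A)\le a<\infty$. Dually, for the descent: from $R((e^{\lambda t}-T(t))^{n})=R((\lambda-A)^{n})\cap R(B^{n}(\lambda,t))$ and the fact that $R^{\infty}$ of the two factors together with the coprimeness give $R((\lambda-A)^{n})+N^{\infty}$-type complements, I would argue $R((\lambda-A)^{d})=R((\lambda-A)^{d+1})$: an element of $R((\lambda-A)^{d})$ can be split using $C_d(\lambda-A)^{d}+B^{d}D_d=I$, and on the $R^{\infty}(\lambda-A)$ part the operator $\lambda-A$ is already surjective by the coprimeness inclusion $N^{\infty}(B(\lambda,t))\subset R^{\infty}(\lambda-A)$. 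This yields $d(\lambda-A)\le d<\infty$, and therefore $\lambda-A$ is Drazin invertible. Contrapositively, $\lambda\in\sigma_D(A)\Rightarrow e^{\lambda t}\in\sigma_D(T(t))$, i.e. $e^{t\sigma_D(A)}\subseteq\sigma_D(T(t))$.

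The main obstacle I anticipate is the domain/unboundedness bookkeeping: $A$ is only densely defined, the identity $C(\lambda-A)+DB(\lambda,t)=I$ and the factorizations in Lemma \ref{l4} hold a priori on $D(A)$ (resp. on suitable powers $D(A^{n})$), so I must be careful that the kernel/range manipulations take place in the right spaces and that ``$R((\lambda-A)^{n})$ closed for some $n$'' — the characterization of Drazin invertibility via $R(T^{n})$ closed and $T|_{R(T^{n})}$ invertible, cited from \cite{Abc} — is legitimately available here. A clean way to sidestep part of this is to use the purely ascent--descent definition ($a(T)<\infty$ and $d(T)<\infty$) rather than the closed-range reduction, since Lemma \ref{l4} is phrased directly in terms of $N((\lambda-A)^{n})$ and $R((\lambda-A)^{n})$; the closedness of the relevant ranges then comes for free because finite ascent and descent of $\lambda-A$ force $X=N((\lambda-A)^{p})\oplus R((\lambda-A)^{p})$ with $R((\lambda-A)^{p})$ closed. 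The remaining routine verification is just that the common stabilization index $p=\max(a,d)$ inherited from $e^{\lambda t}-T(t)$ does the job for $\lambda-A$ as well.
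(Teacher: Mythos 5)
Your proposal is correct in outline but follows a genuinely different route from the paper. The paper works with the characterization of Drazin invertibility as ``$R((\lambda-A)^{n})$ is closed and the restriction of $\lambda-A$ to $R((\lambda-A)^{n})$ is invertible'': it first proves closedness of $R((\lambda-A)^{n})$ by a limiting argument based on the factorization $(e^{\lambda t_0}-T(t_0))^{n}=B(\lambda,t_0)^{n}(\lambda-A)^{n}$ and the inclusion $N(B(\lambda,t_0))\subseteq R((\lambda-A)^{n})$, then proves injectivity of the restriction from the kernel decomposition of Lemma \ref{l4}, and finally surjectivity from the range identity together with the Bezout identity. You instead bound the ascent and descent of $\lambda-A$ directly by those of $e^{\lambda t}-T(t)$, using the same two decompositions $N((e^{\lambda t}-T(t))^{n})=N((\lambda-A)^{n})+N(B^{n}(\lambda,t))$ and $R((e^{\lambda t}-T(t))^{n})=R((\lambda-A)^{n})\cap R(B^{n}(\lambda,t))$. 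Your route avoids the closed-range verification entirely (for a bounded operator, finite ascent and descent do yield the closed complemented range for free); the paper's route has the advantage that its restriction-based setup is reused verbatim for the quasi-Fredholm theorem that follows, and it sidesteps the question of whether the purely algebraic ascent--descent definition is the operative one for the unbounded generator $A$ --- a point you rightly flag as the main bookkeeping burden.

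One step of yours needs repair. To kill the $B$-component $v\in N^{\infty}(\lambda-A)\cap N^{\infty}(B(\lambda,t))$ you invoke the inclusion $N^{\infty}(B(\lambda,t))\subset R^{\infty}(\lambda-A)$; but membership in $N^{\infty}(\lambda-A)\cap R^{\infty}(\lambda-A)$ does not force $v=0$ in general. The correct (and immediate) argument is the Bezout identity of Lemma \ref{l4}(1): $v=C_{p}(\lambda-A)^{p}v+D_{p}B^{p}(\lambda,t)v=0$ for $p$ large enough. With that substitution your ascent bound goes through, and the descent bound is likewise sound once made explicit: for $y=(\lambda-A)^{d}x$ one writes $y=C_{d}(\lambda-A)^{d}y+D_{d}(e^{\lambda t}-T(t))^{d}x$, the first term lying in $R((\lambda-A)^{2d})$ and the second in $R((\lambda-A)^{d+1})$ by the stabilization $R((e^{\lambda t}-T(t))^{d})=R((e^{\lambda t}-T(t))^{d+1})$ and the factorization through $(\lambda-A)^{d+1}$.
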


\begin{proof}
Let $t_{0}>0$ be fixed and suppose that $(e^{\lambda t_0} -T(t_0))$ is Drazin invertible
for some $\lambda\in \mathbb{C}\setminus \{0\}$.
Then $M:= R(e^{\lambda t_0} -T(t_0))^n$ is closed and the restricted semigroup $(e^{\lambda t_0} -T(t_0)_{\mid M})$ is invertible.
We show that $(\lambda-A)$  is Drazin invertible. To this end, in the first we show that $R(\lambda-A)^{n}$ is closed.
Let $x\in\overline{R((\lambda-A)^n))}$, that is, there exist $u_{k}\in D(A^n),$
$ k=1,2,...,$ such that $(\lambda-A)^{n}u_{k}\rightarrow x,$ hence
$$(e^{\lambda t_0} -T(t_0))^{n}u_{k} := B(\lambda,t_0)^{n}(\lambda-A)^{n}u_{k}$$ by Lemma \ref{l1}.
Also,
$$B(\lambda,t_0)^{n}(\lambda-A)^{n}u_{k}~~\rightarrow B(\lambda,t_0)^{n}x.$$
Hence $$(e^{\lambda t_0} -T(t_0))^{n}u_{k}~~\rightarrow B(\lambda,t_0)^{n}x.$$
Since $M:= R(e^{\lambda t_0} -T(t_0))^n$ is closed, then
$$B(\lambda,t_{0})^{n}x \in R(e^{\lambda t_0} -T(t_0))^n. $$
Hence there exists $u\in D(A^n)$ such that
$$B(\lambda,t_0)^{n}x= (e^{\lambda t_0} -T(t_0))^n u.$$
In the other hand, From Lemma \ref{l1}, we have that
$$(e^{\lambda t_0} -T(t_0))^n u = B(\lambda,t_0)^{n}(\lambda -A)u,$$
hence $$B(\lambda,t_0)^{n}x= B(\lambda,t_0)^{n}(\lambda -A)u.$$
This implies that, $$x- (\lambda-A)^{n}u \in N(B(\lambda,t_{0}))\subseteq R(\lambda-A)^n.$$
So $x\in R(\lambda-A)^n$, and hence $R(\lambda-A)^n$ is closed.\\
Now, let us to show that $(\lambda-A_{ \mid R(\lambda-A)^n})$ is invertible. For this,
as  $(e^{\lambda t_0} -T(t_0)_{\mid M})$ is invertible,
then $(e^{\lambda t_0} -T(t_0)_{\mid M})$ is bounded below and $R(e^{\lambda t_0} -T(t_0)_{\mid M})=R(e^{\lambda t_0} -T(t_0))^{n+1}$ is onto.
We show that $(\lambda-A_{\mid R(\lambda -A)^{n} \bigcap D(A)})$ is bounded below.
Since $(e^{\lambda t_0} -T(t_0)_{\mid M})$ is bounded below, then
$(e^{\lambda t_0} -T(t_0)_{\mid M})$ is injective and $R(e^{\lambda t_0} -T(t_0)_{\mid M})$ is closed.
We show that $(\lambda-A_{\mid R(\lambda -A)^{n} \bigcap D(A)})$ is injective and $(\lambda-A_{\mid R(\lambda -A)^{n} \bigcap D(A)})$
is closed.
For all $x\in D(A)$ we have
$$\{0\}=N(e^{\lambda t_0} -T(t_0)_{\mid M \bigcap D(A)})=
N(\lambda-A_{\mid R(\lambda -A)^{n} \bigcap D(A)})+N(B(\lambda,t_0))\bigcap R(B^{n}(\lambda,t_0)),$$
then $N(\lambda-A_{\mid R(\lambda -A)^{n} \bigcap D(A)})=\{0\},$
therefore $(\lambda-A_{\mid R(\lambda -A)^{n} \bigcap D(A)})$ is injective. As $R(e^{\lambda t_0} -T(t_0))^{n+1}$ is closed, then
$(\lambda-A_{\mid R(\lambda -A)^{n} \bigcap D(A)})=R(\lambda-A)^{n+1}$ is also closed.
On the other hand, as $R(e^{\lambda t_0} -T(t_0)_{\mid M})$ is onto, we can easily verify that
$R(\lambda-A_{\mid R(\lambda -A)^{n} \bigcap D(A)})$ is onto.
In fact one can verify that $$R(\lambda-A_{\mid R(\lambda -A)^{n} \bigcap D(A)})=R(\lambda-A)^{n+1}=R(\lambda-A)^{n}.$$
We have $R(\lambda-A)^{n+1}\subset R(\lambda-A)^{n}$ and if $y\in R(\lambda-A)^{n},$
then there exist $x\in D(A^{n})$ such that
$y= R(\lambda-A)^{n}x$ and by $(1)$ of Lemma \ref{l3}, we have
$$ (\lambda -A)^{n}x=(\lambda-A)^{n}C_{n}(\lambda-A)^{n}x+D_{n}B^{n}(\lambda,t)(\lambda-A)^{n}x$$
 $$=(\lambda-A)^{n+1}C_{n}(\lambda-A)^{n-1}x+D^{n}(e^{\lambda t_0}-T(t_0))^{n}x$$
 and as $R(e^{\lambda t_0}-T(t_0)\mid_{(e^{\lambda t_0}-T(t_0))^{n}})=R(e^{\lambda t_0}-T(t_0))^{n+1}$ is onto,
 then there exist $x^{'}\in X$   such that $(e^{\lambda t_0}-T(t_0))^{n}x=(e^{\lambda t_0}-T(t_0))^{n+1}x^{'}=(\lambda-A)^{n+1}B^{n+1}(\lambda,t)x^{'},$
 therefore $y\in R(\lambda-A)^{n}$ and then  $R(\lambda-A_{\mid R(\lambda -A)^{n} \bigcap D(A)})$ is onto.\\
 Finally, $(\lambda-A_{\mid R(\lambda -A)^{n} \bigcap D(A)})$ is Drazin invertible.\\

\end{proof}
\begin{corollary}
For the infinitesimal generator $A$ of a strongly continuous semigroup $(T(t))_{t\geq0}$  one  has the spectral inclusion
$$ e^{t\sigma_{\nu}(A)}\subseteq \sigma_{\nu}(T(t))$$
where $\sigma_{\nu}(.)$ is  the left Drazin and right Drazin spectra.
\end{corollary}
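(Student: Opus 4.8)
\noindent The plan is to re-run the proof of Theorem \ref{td}, isolating the two ingredients that together constitute Drazin invertibility and tracking them separately. Recall that an operator $S$ is left Drazin invertible precisely when there is an integer $n$ with $R(S^{n})$ closed and the restriction $S_{|R(S^{n})}$ bounded below, and right Drazin invertible precisely when there is an integer $n$ with $R(S^{n})$ closed and $S_{|R(S^{n})}$ surjective. So it suffices, arguing by contraposition, to prove the following local statement: if $t_{0}>0$, $\lambda\in\mathbb{C}$, and $e^{\lambda t_{0}}-T(t_{0})$ is left (resp.\ right) Drazin invertible, then $\lambda-A$ is left (resp.\ right) Drazin invertible.

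For the left Drazin case, fix $t_{0}>0$ and an integer $n$ such that $M:=R((e^{\lambda t_{0}}-T(t_{0}))^{n})$ is closed and $(e^{\lambda t_{0}}-T(t_{0}))_{|M}$ is bounded below. Closedness of $R((\lambda-A)^{n})$ is obtained word for word as in the first part of the proof of Theorem \ref{td}: one takes $x\in\overline{R((\lambda-A)^{n})}$, applies $B(\lambda,t_{0})^{n}$, uses that $M$ is closed to place $B(\lambda,t_{0})^{n}x$ inside $R((e^{\lambda t_{0}}-T(t_{0}))^{n})$, and then closes the loop with the inclusion $N(B(\lambda,t_{0}))\subseteq R^{\infty}(\lambda-A)\subseteq R((\lambda-A)^{n})$ coming from part $(3)$ of Lemma \ref{l4}. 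For the restriction of $\lambda-A$ to $R((\lambda-A)^{n})\cap D(A)$: injectivity follows, exactly as in the displayed kernel computation of Theorem \ref{td}, from the decomposition $N((e^{\lambda t_{0}}-T(t_{0}))^{n})=N((\lambda-A)^{n})+N(B^{n}(\lambda,t_{0}))$ of Lemma \ref{l4} together with the injectivity of $(e^{\lambda t_{0}}-T(t_{0}))_{|M}$; and since a bounded-below operator has closed range, $R((e^{\lambda t_{0}}-T(t_{0}))^{n+1})$ is closed, which forces $R((\lambda-A)_{|R((\lambda-A)^{n})\cap D(A)})=R((\lambda-A)^{n+1})$ to be closed. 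Hence $(\lambda-A)_{|R((\lambda-A)^{n})\cap D(A)}$ is bounded below and $\lambda-A$ is left Drazin invertible.

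For the right Drazin case I would keep the same skeleton, now assuming $(e^{\lambda t_{0}}-T(t_{0}))_{|M}$ surjective. Closedness of $R((\lambda-A)^{n})$ is unchanged, and surjectivity of $(\lambda-A)_{|R((\lambda-A)^{n})\cap D(A)}$ is precisely the final ``onto'' part of the proof of Theorem \ref{td}: given $y\in R((\lambda-A)^{n})$, write $y=(\lambda-A)^{n}x$ and, using Lemmas \ref{l3} and \ref{l4}, expand $(\lambda-A)^{n}x=(\lambda-A)^{n+1}C_{n}(\lambda-A)^{n-1}x+D_{n}(e^{\lambda t_{0}}-T(t_{0}))^{n}x$; surjectivity of $(e^{\lambda t_{0}}-T(t_{0}))_{|M}$ and Lemma \ref{l1} then push the second summand into $R((\lambda-A)^{n+1})$, giving $R((\lambda-A)^{n+1})=R((\lambda-A)^{n})$ with $(\lambda-A)_{|R((\lambda-A)^{n})}$ surjective, so $\lambda-A$ is right Drazin invertible. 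The one delicate point, just as in Theorem \ref{td}, is the bookkeeping with the domains $D(A^{n})$ and with the restricted generator: one has to check via Lemma \ref{l2} that the part of $A$ in the closed $(T(t))_{t\geq0}$-invariant subspace $R((\lambda-A)^{n})$ really is the operator whose bounded-belowness (resp.\ surjectivity) is being asserted, and that every commutation and decomposition identity from Lemmas \ref{l3}--\ref{l4} is invoked only for vectors that actually lie in the relevant domains. Once this is handled, no idea beyond the proof of Theorem \ref{td} is required.
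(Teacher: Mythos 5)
Your proposal is correct and follows exactly the route the paper intends: the corollary is stated without proof precisely because the argument for Theorem \ref{td} already establishes the two halves separately (closedness of $R((\lambda-A)^{n})$ plus injectivity/closed range of the restriction from the bounded-below hypothesis, and surjectivity of the restriction from the onto hypothesis), and your splitting into the left and right Drazin cases is just that decomposition made explicit. No new idea is needed beyond what you describe.
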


\noindent The following example shows that the inclusion in Theorem \ref{td} is strict.

\begin{example}
Let $X$ be the Banach space of continuous functions on $[0, 1]$
which are equal to zero at $x = 1$ with the supremum norm. Define
\begin{equation*}(T(t)f)(x):=\left\{
\begin{array}{lll}
f(x+t)   & \hbox{} if x+t\leq1; \\
               0    & \hbox{} if x+t>1.
\end{array}
\right.
\end{equation*}
$T(t)$ is obviously a $C_0$ semigroup on $X.$. Its infinitesimal
generator $A$ is given on
$$D(A) = \{f:f\in C^{1}([0, 1])\cup X,f^{'} \in X\}$$
by
$$Af=f^{'} \mbox{ for } f\in D(A).$$
One checks easily that for every $\lambda\in\mathbb{C} $ and $g \in X$ the equation $\lambda f - f^{'} = g$
has a unique solution $f \in X$ given by
$$f(t) = \int_{t}^{1}e^{\lambda(t-S)}g(s) ds.$$
Therefore $\sigma(A) = \emptyset$ see \cite{P}, hence $\sigma_{D}(A)=\emptyset.$\\
On the other hand, $T(t)$ is a
bounded linear operator for every $t \geq 0.$ Suppose that  $f\in X$ is not periodic,
then we have $0\in \sigma_{D}(T(t)).$ Indeed,
if $0\notin \sigma_{D}(T(t))$, then there exist a finite integer  $n\in \mathbb{N}$ such that $(T^{n}(t)f)(x)=(T^{n+1}(t)f)(x)$, this implies that
$f(x+nt)=f(x+nt+t)$, therefore $f$ is periodic, this is a contradiction. Hence the inclusion
$$ e^{t\sigma_{D}(A)}\subseteq \sigma_{D}(T(t)).$$ is strict.
\end{example}
\begin{theorem}
For the infinitesimal generator $A$ of a strongly continuous semigroup $(T(t))_{t\geq0}, $ we have the following inclusion:
$$ e^{t\sigma_{QF}(A)}\subseteq \sigma_{QF}(T(t)).$$
\end{theorem}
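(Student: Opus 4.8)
The plan is to mirror the structure of the proof of Theorem \ref{td}, replacing ``Drazin invertible'' by ``quasi-Fredholm'' throughout, and exploiting the characterisation that $S$ is quasi-Fredholm if and only if there is an integer $n$ with $R(S^n)$ closed and the restriction $S_{\mid R(S^n)}$ semi-regular (i.e.\ has closed range and null space contained in its hyper-range). So I fix $t_0>0$ and assume that $e^{\lambda t_0}-T(t_0)$ is quasi-Fredholm for some $\lambda\in\mathbb{C}\setminus\{0\}$; I must produce an integer $n$ for which $R((\lambda-A)^n)$ is closed and $(\lambda-A)_{\mid R((\lambda-A)^n)\cap D(A)}$ is semi-regular, i.e.\ has closed range and $N(\lambda-A_{\mid})\subseteq R^{\infty}(\lambda-A_{\mid})$.

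First I would reuse verbatim the argument from Theorem \ref{td} showing that $R((\lambda-A)^n)$ is closed: the only ingredients there were that $R((e^{\lambda t_0}-T(t_0))^n)$ is closed, Lemma \ref{l1}(1)(b), and the inclusion $N(B(\lambda,t_0))\subseteq R^{\infty}(\lambda-A)$ from Lemma \ref{l4}(3) — none of these used invertibility of the restriction, only closedness of $M:=R((e^{\lambda t_0}-T(t_0))^n)$, which holds since $e^{\lambda t_0}-T(t_0)$ is quasi-Fredholm. The same computation then also gives $R((\lambda-A)^{n+1})=R((\lambda-A)^n)$, hence $R^{\infty}(\lambda-A)=R((\lambda-A)^n)$ and the range of the restriction $(\lambda-A)_{\mid R((\lambda-A)^n)\cap D(A)}$ equals $R((\lambda-A)^{n+1})$, which is closed. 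So the closed-range half of semi-regularity comes essentially for free from the Theorem \ref{td} argument (using that $(e^{\lambda t_0}-T(t_0))_{\mid M}$ semi-regular forces $R((e^{\lambda t_0}-T(t_0))^{n+1})$ closed).

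The genuinely new point is the Kato condition $N(\lambda-A_{\mid})\subseteq R^{\infty}(\lambda-A_{\mid})=R^{\infty}(\lambda-A)$. Here I would use Lemma \ref{l4}(2): from $N((e^{\lambda t_0}-T(t_0))^k)=N((\lambda-A)^k)+N(B^k(\lambda,t_0))$ together with Lemma \ref{l4}(3) $N^{\infty}(\lambda-A)\subseteq R^{\infty}(B(\lambda,t_0))$ and the already established $R^{\infty}(e^{\lambda t_0}-T(t_0))=R^{\infty}(\lambda-A)\cap R^{\infty}(B(\lambda,t_0))$. The semi-regularity of $(e^{\lambda t_0}-T(t_0))_{\mid M}$ says $N(e^{\lambda t_0}-T(t_0))\cap M\subseteq R^{\infty}(e^{\lambda t_0}-T(t_0))$; translating the left side through $N(e^{\lambda t_0}-T(t_0))=N(\lambda-A)+N(B(\lambda,t_0))$ and intersecting with $R((\lambda-A)^n)$, the $N(B(\lambda,t_0))$ part is absorbed into $R^{\infty}(\lambda-A)$ by Lemma \ref{l4}(3), so one extracts $N(\lambda-A)\cap R((\lambda-A)^n)\subseteq R^{\infty}(e^{\lambda t_0}-T(t_0))\subseteq R^{\infty}(\lambda-A)$, which is exactly the required Kato condition for the restriction. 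I expect this bookkeeping with the sums and intersections of kernels and ranges — making sure each inclusion is used in the correct direction and that the domain restrictions $\cap D(A)$ do not cause trouble — to be the main obstacle; the rest is a transcription of the previous proof. Finally, having verified $R((\lambda-A)^n)$ closed and $(\lambda-A)_{\mid R((\lambda-A)^n)\cap D(A)}$ semi-regular, I conclude $\lambda-A$ is quasi-Fredholm, i.e.\ $\lambda\notin\sigma_{QF}(A)$, which gives the contrapositive $e^{t\sigma_{QF}(A)}\subseteq\sigma_{QF}(T(t))$ after the usual remark that $\lambda=0$ is handled by passing to $R(T(t))$ or is excluded as in Lemma \ref{l1}.
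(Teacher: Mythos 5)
Your proposal follows essentially the same route as the paper's proof: reuse the closedness argument of Theorem \ref{td} to get $R((\lambda-A)^n)$ closed, obtain closedness of the restricted range from the semi-regularity of $(e^{\lambda t_0}-T(t_0))_{\mid M}$, and derive the Kato condition $N(\lambda-A_{\mid})\subseteq R^{\infty}(\lambda-A_{\mid})$ from the kernel and hyper-range identities of Lemmas \ref{l1} and \ref{l4}. The only quibble is your incidental claim that $R((\lambda-A)^{n+1})=R((\lambda-A)^n)$, which silently borrows the surjectivity step of Theorem \ref{td} that is not available in the quasi-Fredholm setting; but this claim is not needed, and the parenthetical route you give (closed range of the restriction directly from semi-regularity) is exactly what the paper does.
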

\begin{proof}
Let $t_{0}>0$ be fixed and suppose that $(e^{\lambda t_0} -T(t_0))$ is quasi-Fredholm,
for some $\lambda\in \mathbb{C}\setminus \{0\}$.
Then $M:= R(e^{\lambda t_0} -T(t_0))^n$ is closed and the restricted semigroup $(e^{\lambda t_0} -T(t_0)_{\mid M})$ is semi-regular.\\
We show that $(\lambda-A)$  is quasi-Fredholm, to this end we show that $R(\lambda-A)^{n}$ is closed and   $(\lambda-A_{|R(\lambda -A)^{n}\bigcap D(A)})$ is semi-regular. That is, ($R(\lambda-A_{|R(\lambda -A)^{n}\bigcap D(A)})$ is closed and
$N(\lambda-A_{|R(\lambda -A)^{n}\bigcap D(A)})\subseteq R^{\infty}(\lambda-A_{|R(\lambda -A)^{n}\bigcap D(A)})$).\\
As $M:= R(e^{\lambda t_0} -T(t_0))^n$ is closed, then by the same argument as in the proof of Theorem \ref{td}, we conclude that
$R(\lambda-A)^{n}$ is closed.
Since $(e^{\lambda t_0} -T(t_0)_{\mid M})$ is semi-regular, then $R(e^{\lambda t_0} -T(t_0)_{\mid M})=R(e^{\lambda t_0} -T(t_0))^{n+1}$
is closed, this implies that $R(\lambda-A_{|R(\lambda -A)^{n}\bigcap D(A)})=R(\lambda-A)^{n+1}$ is closed.
On the other hand, by Lemma \ref{l1} we have that
$$N(\lambda-A_{|R(\lambda -A)^{n}\bigcap D(A)})\subseteq N(e^{\lambda t_0} -T(t_0)_{\mid M\bigcap D(A)}\subseteq N(e^{\lambda t_0} -T(t_0)_{\mid M}).$$
Since $(e^{\lambda t_0} -T(t_0)_{\mid M})$ is semi-regular, then
$ N(e^{\lambda t_0} -T(t_0)_{\mid M}) \subseteq R^{\infty}(e^{\lambda t} -T(t_0)_{\mid M})=R^{\infty}(\lambda-A_{|R(\lambda -A)\bigcap D(A)})\bigcap R^{\infty}B(\lambda,t_0)\subset R^{\infty}(\lambda-A_{|R(\lambda -A)^{n}\bigcap D(A)}),$
hence $(\lambda-A_{|R(\lambda -A)^{n}\bigcap D(A)})$ is semi-regular.
Finally, we conclude that $(\lambda-A)$ is quasi-Fredholm.
\end{proof}

\noindent
Let $A$ be the infinitesimal generator of a $C_0$ semigroup $(T(t))_{t\geq0}.$
In the following,  we will give condition on $(T(t))_{t\geq0}$ under which  facts $A$ is Drazin invertible,
$A$ is $B-$Fredholm and $A$ is $Q-$Fredholm  are equivalent.

\begin{theorem}
Let $A$ be the infinitesimal generator of a $C_0$ semigroup $(T(t))_{t\geq0}$.\\ If  $\lim_{t\rightarrow \infty}\frac{1}{t^n}\|T(t)\|=0$, for some $n\in\mathbb{N}$,
the following assertions are equivalent:
\begin{enumerate}
  \item $A$ is quasi-Fredholm;
  \item $A$ is Drazin invertible;
  \item $A$ is B-Fredholm.
\end{enumerate}
\end{theorem}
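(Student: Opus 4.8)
The plan is to establish the cyclic implications, treating the easy chain $(2)\Rightarrow(3)\Rightarrow(1)$ first — neither step uses the growth hypothesis — and then the one substantial implication $(1)\Rightarrow(2)$.

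\smallskip
\noindent $(2)\Rightarrow(3)$: if $A$ is Drazin invertible, then by the characterization recalled in the introduction (\cite{Abc}) there is an integer $p$ with $R(A^p)$ closed and $A_{\mid R(A^p)}$ invertible; an invertible operator is Fredholm, so $A$ is $B$-Fredholm (see \cite{BE}). $(3)\Rightarrow(1)$: if $A$ is $B$-Fredholm, choose $p$ with $R(A^p)$ closed and $S:=A_{\mid R(A^p)}$ Fredholm; since a Fredholm operator is quasi-Fredholm (see \cite{BE}), there is $q$ with $R(S^q)=R(A^{p+q})$ closed and $S_{\mid R(S^q)}=A_{\mid R(A^{p+q})}$ semi-regular, i.e. $A$ is quasi-Fredholm.

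\smallskip
\noindent For $(1)\Rightarrow(2)$ I would prove $a(A)<\infty$ and $d(A)<\infty$ separately. The hypothesis already forces $a(A)\le n+1$: otherwise there is $y\in N(A^{n+2})\setminus N(A^{n+1})$, and since $A^{n+2}y=0$ one has the finite expansion $T(t)y=\sum_{k=0}^{n+1}\tfrac{t^k}{k!}A^k y$, whose leading term $\tfrac{t^{n+1}}{(n+1)!}A^{n+1}y$ is nonzero; hence $\|T(t)\|\ge\|T(t)y\|/\|y\|$ grows at least like a positive multiple of $t^{n+1}$, contradicting $\lim_{t\to\infty}t^{-n}\|T(t)\|=0$. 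For the descent, use quasi-Fredholmness: fix $m$ with $M:=R(A^m)$ closed and $A_{\mid M}$ semi-regular. Since $T(t)A^m x=A^m T(t)x$ for $x\in D(A^m)$, the closed subspace $M$ is invariant under each $T(t)$, so $(T(t)_{\mid M})_{t\ge0}$ is a $C_0$-semigroup on $M$ whose generator, by Lemma \ref{l2}, is exactly $A_{\mid M}$; moreover $\|T(t)_{\mid M}\|\le\|T(t)\|$, so this semigroup has growth bound $\le 0$ and hence $\sigma(A_{\mid M})\subseteq\{\lambda\in\mathbb{C}:\mathrm{Re}\,\lambda\le 0\}$. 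Since $A_{\mid M}$ is semi-regular, the Kato perturbation theorem (\cite{AI}) yields $\varepsilon>0$ such that $A_{\mid M}-\lambda$ is semi-regular with $\dim N(A_{\mid M}-\lambda)$ and $\mathrm{codim}\,R(A_{\mid M}-\lambda)$ constant for $|\lambda|<\varepsilon$; were either constant positive, the whole disc $|\lambda|<\varepsilon$ would lie in $\sigma(A_{\mid M})$, which is impossible because that disc meets the open right half-plane, which is contained in $\rho(A_{\mid M})$. Hence both constants are $0$, $A_{\mid M}$ is bijective, so $R(A^{m+1})=R(A_{\mid M})=M=R(A^m)$ and $d(A)\le m$. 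Therefore $a(A),d(A)<\infty$ and $A$ is Drazin invertible.

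\smallskip
\noindent The step I expect to be the main obstacle is this last spectral argument: making the Kato perturbation/punctured-neighbourhood machinery work for the (generally unbounded) closed operator $A_{\mid M}$ and correctly turning ``$\sigma(A_{\mid M})$ lies in a half-plane'' into bijectivity of $A_{\mid M}$. A more computational alternative would prove $d(A)<\infty$ by a polynomial-growth estimate for the adjoint semigroup, but on a non-reflexive $X$ this forces one to pass to the sun-dual $X^{\odot}$ and to check that the relevant functionals live there, which is more delicate; hence I would follow the spectral route above.
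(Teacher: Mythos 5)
Your proposal is correct in outline, and half of it coincides with the paper's argument: the easy chain $(2)\Rightarrow(3)\Rightarrow(1)$ is handled exactly as in the paper (via the inclusions Drazin invertible $\subset$ B-Fredholm $\subset$ quasi-Fredholm), and your ascent bound is essentially the same Taylor-expansion-with-integral-remainder trick the paper uses for injectivity: the paper takes $y=A^{n}x$ in the kernel of the restriction, notes $T(s)y=y$, and reads off a forced $t^{n}$-growth of $\|T(t)x\|$ contradicting $\lim_{t\to\infty}t^{-n}\|T(t)\|=0$; you apply the same device to an element of $N(A^{n+2})\setminus N(A^{n+1})$. Where you genuinely diverge is the surjectivity/descent half. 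The paper passes to the adjoint semigroup, invokes van Neerven's weak$^{*}$ integral formula $T(t)'x'-x'=\mathrm{weak}^{*}\!\int_{0}^{t}T(s)'A'x'\,ds$ together with Mbekhta's duality for semi-regular operators to get $N(A'_{\mid\cdots})=\{0\}$, hence dense range, hence (the range being closed by semi-regularity) surjectivity of the restriction. You instead restrict the semigroup to $M=R(A^{m})$, observe that the growth hypothesis forces $\omega_{0}\le 0$ and hence $\sigma(A_{\mid M})\subseteq\{\mathrm{Re}\,\lambda\le 0\}$, and conclude bijectivity of $A_{\mid M}$ from the local constancy of nullity and deficiency at semi-regular points. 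Your route avoids exactly the delicacy you identify in the dual approach (on non-reflexive $X$ the adjoint semigroup is only weak$^{*}$-continuous, a point the paper glosses over), at the price of needing the Kato stability theorem for \emph{closed, possibly unbounded} semi-regular operators; that version is a known result (Mbekhta, Labrousse), but it must be cited explicitly, since \cite{AI} develops the constancy theorem only for bounded operators. Note finally that once $A_{\mid M}$ is shown bijective, your ascent paragraph is redundant for the conclusion: closedness of $R(A^{m})$ together with invertibility of the restriction is already the characterization of Drazin invertibility from \cite{Abc} that the paper itself targets.
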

\begin{proof}
$(1)\Rightarrow (2):$

Since $A$ is quasi-Fredholm, then $R(A^n)$ closed and $A_{\mid(R(A^n)\bigcap D(A))}$ is semi-regular.
Let $y\in N(A_{\mid(R(A^n)\bigcap D(A))}),$ then there exists
$x \in (R(A^n)\bigcap D(A^{n}))$ such that $y=A^{n}x$.
We integer by parts in the following formula:
$$T(t)x-x =\int_{0}^{t}T(s)Axds ,$$ we obtain that
$$T(t)x=x+tA+\frac{t^2}{2!}A^{2}+\int_{0}^{t}\frac{(t-s)^{2}}{2!}T(s)A^{3}xds.$$
We repeat this operation for n times, we obtain that
$$T(t)x=\sum_{k=0}^{n-1}\frac{t^k}{k!}A^{k}x+\int_{0}^{t}\frac{(t-s)^{n-1}}{(n-1)!}T(s)A^{n}xds.$$
Hence,$$T(t)x=\sum_{k=0}^{n-1}\frac{t^k}{k!}A^{k}x+y\int_{0}^{t}\frac{(t-s)^{n-1}}{(n-1)!}ds$$
$$=\sum_{k=0}^{n-1}\frac{t^k}{k!}A^{k}x+\frac{t^n}{n!}y.$$
As $\displaystyle{\lim_{t\rightarrow \infty}} \frac{1}{t^n}\|T(t)\|=0$, then $y=0$, this implies that
$$N(A_{\mid(R(A^n)\bigcap D(A))})=\{0\}.$$
On the other hand, let  $(T(t)^{'})_{t\geq0}$ with infinitesimal generator $A^{'}$  the adjoint semigroup of $(T(t))_{t\geq0}$.
Since $A_{\mid(R(A^n)\bigcap D(A))}$ is semi-regular, then $A^{'}_{\mid(R(A^{'n})\bigcap D(A^{'}))}$ is also semi-regular,
see \cite[Proposition 1.6]{Mb}.
Using the following formula
$$T(t)^{'}x^{'}-x^{'} =weak^{*}\int_{0}^{t}T(s)^{'}A^{'}x^{'}ds,~~\forall x^{'}\in (R(A^{'n})\bigcap D(A^{'})),~~\forall t\geq0 $$
which is proved in \cite[Proposition 1.2.2]{Vn} and by the same argument as above, we get that
$N(A^{'}_{\mid(R(A^{'n})\bigcap D(A^{'}))})=\{0\}$.
This is equivalent to the fact that $$\overline{R(A_{\mid(R(A^n)\bigcap D(A))})}= (R(A^n)\bigcap D(A)).$$
Hence $R(A_{\mid(R(A^n)\bigcap D(A))})= (R(A^n)\bigcap D(A)),$ since
$(R(A^n)\bigcap D(A))$ is closed.
From this it follows that  $A_{\mid(R(A^n)\bigcap D(A))}$ is surjective and hence it is invertible.
Finally, $A$ is Drazin invertible.\\
$(2)\Rightarrow (1)$: is clear.\\
$(1),$ $(2)$ and  $(3)$ are equivalent, since the class of Drazin invertible operator is  a subclass of B-Fredholm operator and the class of B-Fredholm operator  is a subclass of Quasi-Fredholm operator.

\end{proof}


\end{document}